\titleformat{\section}[block]{\normalfont\large\bfseries}
{\thesection.}{0.8em}{}
\titleformat{\subsection}[runin]{\normalfont\bfseries}
{\thesubsection.}{0.5em}{}[.]
\titleformat{\subsubsection}[runin]{\normalfont\normalsize\itshape}
{\thesubsubsection.}{0.5em}{}[.]
\definecolor{gG}{RGB}{ 60, 186,  84}
\definecolor{gY}{RGB}{244, 194,  13}
\definecolor{gB}{RGB}{ 72, 133, 237}
\definecolor{gR}{RGB}{219,  50,  54}
\newlength\htG
\protected\def\google{\settoheight{\htG}{G}%
  \begin{tikzpicture}[yscale=-1,scale=(\htG/240pt),baseline=(baseline)]
    \fill[fill=gG] svg {m797.49 249.7h35.975v-240.75h-35.975z};
    \coordinate (baseline) at (current bounding box.south);
    \fill[fill=gB] svg {m246.11 116.18h-116.57v34.591h82.673c-4.0842 48.506-44.44 69.192-82.533 69.192-48.736 0-91.264-38.346-91.264-92.092 0-52.357 40.54-92.679 91.371-92.679 39.217 0 62.326 25 62.326 25l24.22-25.081s-31.087-34.608-87.784-34.608c-72.197-0.001-128.05 60.933-128.05 126.75 0 64.493 52.539 127.38 129.89 127.38 68.031 0 117.83-46.604 117.83-115.52 0-14.539-2.1109-22.942-2.1109-22.942z};
    \fill[fill=gR] svg {m341.6 91.129c-47.832 0-82.111 37.395-82.111 81.008 0 44.258 33.249 82.348 82.673 82.348 44.742 0 81.397-34.197 81.397-81.397 0-54.098-42.638-81.959-81.959-81.959zm0.47563 32.083c23.522 0 45.812 19.017 45.812 49.66 0 29.993-22.195 49.552-45.92 49.552-26.068 0-46.633-20.878-46.633-49.79 0-28.292 20.31-49.422 46.741-49.422z};
    \fill[fill=gY] svg {m520.18 91.129c-47.832 0-82.111 37.395-82.111 81.008 0 44.258 33.249 82.348 82.673 82.348 44.742 0 81.397-34.197 81.397-81.397 0-54.098-42.638-81.959-81.959-81.959zm0.47562 32.083c23.522 0 45.812 19.017 45.812 49.66 0 29.993-22.195 49.552-45.92 49.552-26.068 0-46.633-20.878-46.633-49.79 0-28.292 20.31-49.422 46.741-49.422z};
    \fill[fill=gB] svg {m695.34 91.215c-43.904 0-78.414 38.453-78.414 81.613 0 49.163 40.009 81.765 77.657 81.765 23.279 0 35.657-9.2405 44.796-19.847v16.106c0 28.18-17.11 45.055-42.936 45.055-24.949 0-37.463-18.551-41.812-29.078l-31.391 13.123c11.136 23.547 33.554 48.103 73.463 48.103 43.652 0 76.922-27.495 76.922-85.159v-146.77h-34.245v13.836c-10.53-11.347-24.93-18.745-44.04-18.745zm3.178 32.018c21.525 0 43.628 18.38 43.628 49.768 0 31.904-22.056 49.487-44.104 49.487-23.406 0-45.185-19.005-45.185-49.184 0-31.358 22.619-50.071 45.66-50.071z};
    \fill[fill=gR] svg {m925.89 91.02c-41.414 0-76.187 32.95-76.187 81.57 0 51.447 38.759 81.959 80.165 81.959 34.558 0 55.768-18.906 68.426-35.845l-28.235-18.787c-7.3268 11.371-19.576 22.484-40.018 22.484-22.962 0-33.52-12.574-40.061-24.754l109.52-45.444-5.6859-13.318c-10.58-26.08-35.26-47.86-67.92-47.86zm1.4268 31.413c14.923 0 25.663 7.9342 30.224 17.447l-73.139 30.57c-3.1532-23.667 19.269-48.017 42.915-48.017z};
  \end{tikzpicture}%
}
\newcounter{first}
{\end{list}}
\definecolor{dkgreen}{rgb}{0.1,0.4,0.0}
\definecolor{dkblue}{rgb}{0,0.1,0.8}
\definecolor{dkred}{rgb}{1,0,0}
\def\define{\ensuremath{\overset{\operatorname{\scriptscriptstyle def}}=}}
\newenvironment{alphenum}%
{ 
 
 \begin{enumerate}}{\end{enumerate}}
\newenvironment{romenum}
{

\begin{enumerate}}{\end{enumerate}}
\theoremstyle{plain}
\newtheorem{theorem}{Theorem}[section]
\theoremstyle{definition}
\newtheorem{definition}{Definition}[section]
\newtheorem*{exampleintro*}{Example}
\newtheorem*{conjecture*}{Conjecture}
\newcommand{\doi}[1]
{\texttt{\href{http://dx.doi.org/#1}{\nolinkurl{doi:#1}}}}
\newcommand{\web}[1]
{\texttt{\href{#1}{\nolinkurl{#1}}}}
\title{Cooperative games on simplicial complexes}
\author{Ivan Martino}
\date{\today}
\begin{document}

\begin{abstract}
In this work, we define cooperative games on simplicial complexes, generalizing the study of probabilistic values of Weber \cite{Weber-robabilistic-values-for-games} and quasi-probabilistic values of Bilbao, Driessen, Jim\'{e}nez Losada and Lebr\'{o}n \cite{Shapley-matroids-static}.

\noindent
Applications to Multi-Touch Attribution and the interpretability of the Machine-Learning prediction models motivate these new developments \cite{NIPS2017_7062, Ribeiro, Erik-Igor, Shrikumar, 7546525, OnPixel}.

We deal with the axiomatization provided by the $\lambda_i$-dummy and the monotonicity requirements together with a probabilistic form of the symmetric and the efficiency axioms.

\noindent
We also characterize combinatorially the set of probabilistic participation influences as the facet polytope of the simplicial complex.
\end{abstract}

\maketitle

\def\rank#1{\ensuremath{\operatorname{rank} #1}}
\def\face{\ensuremath{\operatorname{\mathcal F} (\Delta)}}

\def\FacetsD{\ensuremath{\operatorname{Facets} \Delta}}	
\def\Facets#1{\ensuremath{\operatorname{Facets} #1}}
\def\Facet#1#2{\ensuremath{\operatorname{Facets}_{#2} #1}}

\def\closure#1#2{\ensuremath{\operatorname{cl}_{#1} (#2)}}
\def\closureDelta#1{\ensuremath{\operatorname{cl}_{\Delta} (#1)}}

\def\Star#1#2{\ensuremath{\operatorname{Star}_{#2} #1 }}
\def\StarNoD#1{\ensuremath{\operatorname{Star} #1 }}
\def\StarD#1{\ensuremath{\operatorname{Star}_{\Delta} #1 }}

\def\Link#1#2{\ensuremath{\operatorname{Link}_{#2} #1 }}
\def\LinkNoD#1{\ensuremath{\operatorname{Link} #1 }}
\def\LinkD#1{\ensuremath{\operatorname{Link}_{\Delta} #1 }}

\def\ffD{\ensuremath{\operatorname{\textbf{f}}(\Delta)}}
\def\ff#1{\ensuremath{\operatorname{\textbf{f}}(#1)}}
\def\ffi#1#2{\ensuremath{\operatorname{f}_{#1}(#2)}}

\def\charFun{\ensuremath{\operatorname{\mathbb{R}} (\Delta) }}

\def\vtot{\ensuremath{\operatorname{v}_{\Delta}}}
\def\vtotof#1{\ensuremath{\operatorname{v}_{#1}}}

\def\Coalition#1#2{\ensuremath{\operatorname{Coalition}_#2 #1 }}


A cooperative game is a pair $(n, v)$ where $n$ is a positive integer and $v$ is the \emph{worth} function $v: 2^n \rightarrow \mathbb{R}$, where $2^n$ is the power set of $[n]\define \{1, \dots, n\}$. We assume that $v(\emptyset)=0$.
The elements of $[n]$ are players of the game that may join in coalitions;
a coalition $T$ is a subset of $[n]$ and $v(T)$ is the number of payoff of $T$ in the cooperative game. 

\noindent
For every player $i$ an individual value $\phi_i(v)$ is a (linear) function measuring the additional \emph{worth} that $i$ provides to a coalition during the cooperative game $(n, v)$.
%
The study of such values was extremely relevant for the community and we would like to highlight here a few important works of Shapley \cite{Shapley-a-value, Shapley-core-convex} and Weber  \cite{Weber-robabilistic-values-for-games} that have influenced the author.

Recently, quite a lot of effort has been done to study cooperative games on matroids \cite{Shapley-matroids-static, Shapley-matroids-dynamic, MR3886659, MR2847360, MR2825616, MR1436577, MR1707975}.
%
Inspired by this recent articles, in this manuscript we define cooperative games on simplicial complexes and we study quasi-probabilistic values for such games.

\vspace{0.2cm}

A simplicial complex is a family $\Delta$ of subsets of $[n]$ such that if $X\in \Delta$, then every subset $Y\subseteq X$ will also belong to $\Delta$. For instance, every graph and the full power set $2^{n}$ are simplicial complex. In the latter case, $\Delta=2^{n}$ is called a $(n-1)$-dimensional simplex.
The reader may find more examples all along the paper, but also highlighted in Figures \ref{fig:figure-example-2}, \ref{fig:figure-matroid-yes}, and \ref{fig:figure-matroid-no}.

\noindent
A cooperative game on $\Delta$ is defined by a worth function $v$:
\[
	v: \Delta \rightarrow \mathbb{R}
\]
with the usual constrain that $v(\emptyset)=0$.
The traditional game $(n, v)$ can be seen as the cooperative game on the $(n-1)$-dimensional simplex $(2^n, v)$, where the function $v$ is the defines as in the classical case.

\noindent
In other words, in a cooperative game on a a simplicial complex $\Delta$ a player $i$ in $[n]$ may join a coalition $T$ only if $T\cup i\in \Delta$. In such case, the coalition is \emph{feasible} and, \emph{unfeasible} otherwise.
Similarly as in the classical case, the individual function $\phi_i(v)$ measures the additional value that $i$ provide to a feasible coalition during the cooperative game $(\Delta, v)$.

\section*{Presentation of the results}
We are going to present the new research developments and after this we provide several motivation for this work.
Further, we explain carefully a concrete application to Multi-Touch Attribution and for this prototype example we show explicitely all the new introduced objects. 

\vspace{0.1cm}
A player $i$ is \emph{dummy} for the cooperative game $(\Delta, v)$ if the player does not provide better results to the coalition then its own worth $v(\{i\})$. In mathematical terms, $i$ is a dummy player in the cooperative game $(\Delta, v)$ if $v(S\cup \{i\})=v(S)+v(\{i\})$ for every $i\notin S$ and  $S\cup \{i\}\in \Delta$.
For simplicity we are going to neglect the set brackets for the singleton and write $S\cup i$ instead of $S\cup \{i\}$ and $v(i)$ instead of $v(\{i\})$.

\noindent
Moreover, we say that the worth function $v$ is \emph{monotone} if $S \subseteq T \in \Delta$ implies $v(S)\leq v(T)$.
We denote by $\charFun$ the set of all cooperative game on the simplicial complex $\Delta$.

Theorem 2 and 3 of \cite{Weber-robabilistic-values-for-games} characterize the individual values that satisfies the dummy and the monotonicity axioms. 
Theorem 3.1 of \cite{Shapley-matroids-static} generalizes such results for cooperative game on a matroids. Every matroid is a simplicial complex but not vice versa, see for instance the example in Figure \ref{fig:figure-example-2}. 
Next theorem moves further and extends the results to every simplicial complex $\Delta$.

We denote by $0\leq \lambda_i \leq 1$ the rate of participation of the player $i$ in the cooperative game $(\Delta, v)$ and let us rewrite the two main conditions in Theorem 3.1 of \cite{Shapley-matroids-static}: 
\begin{description}
	\item [\hspace{1.50cm} $\lambda_i$-Dummy axiom] If the player $i$ is dummy for $(\Delta, v)$, then $\phi_i(v)=\lambda_i v(i)$;
	\item [\hspace{1cm} Monotonicity axiom] If $v$ is monotone, then $\phi_i(v)\geq 0$.
\end{description}

\setcounter{section}{2}
\setcounter{theorem}{0}
\begin{theorem}
	Let $\Delta$ be a simplicial complex on $n$ verticies  and let $\phi_i$ be an individual value for a player $i$.
	The individual value $\phi_i$ is a $\mathbb{R}$-linear function satisfying the $\lambda_i$-Dummy axiom and the Monotonicity axiom if and only if there exists a collection of non-negative real numbers $\{p_T^i: T\in \Link{i}{\Delta}\}$ with 
		\begin{equation*}
			\sum_{T\in \Link{i}{\Delta}} p_T^i = \lambda_i	
		\end{equation*}	
		such that for all $v$ in $\charFun$,
		\begin{equation*}
			\phi_i(v)=\sum_{T\in \Link{i}{\Delta}} p_T^i (v(T \cup i) - v(T)).		
		\end{equation*}
\end{theorem}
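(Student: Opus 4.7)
The plan is to prove the harder (necessity) direction by identifying $\phi_i$ as a linear combination of the marginal functionals
\[
	\ell_T(v)\define v(T\cup i)-v(T),\qquad T\in\Link{i}{\Delta},
\]
and then extracting each coefficient $p_T^i$ via a tailor-made monotone test game. The sufficiency direction is routine: linearity is immediate; the dummy hypothesis collapses every $\ell_T(v)$ to $v(i)$ so that $\phi_i(v)=\bigl(\sum_T p_T^i\bigr)v(i)=\lambda_i v(i)$; and monotonicity holds termwise because each $p_T^i\ge 0$ and each marginal is non-negative.

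For the expansion step I first verify that the family $\{\ell_T\}_{T\in\Link{i}{\Delta}}$ is $\mathbb{R}$-linearly independent on $\charFun$, by evaluating a putative relation on the indicator basis $\{e_S:S\in\Delta\setminus\{\emptyset\}\}$; the coefficients $c_T$ decouple cleanly. Hence the operator $E\colon\charFun\to\mathbb{R}^{\Link{i}{\Delta}}$, $E(v)=(\ell_T(v))_T$, is surjective, and $\ker E$ consists exactly of games with $v(T\cup i)=v(T)$ for every $T\in\Link{i}{\Delta}$. Specializing to $T=\emptyset$ forces $v(i)=0$, and the remaining conditions say that $i$ is dummy for $v$; the $\lambda_i$-Dummy axiom thus gives $\phi_i(v)=\lambda_i v(i)=0$ on $\ker E$. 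By duality (and the dimension count furnished by the independence of the $\ell_T$), $\phi_i\in\mathrm{span}\{\ell_T\}$, yielding uniquely determined scalars $p_T^i$ with $\phi_i=\sum_{T\in\Link{i}{\Delta}} p_T^i\,\ell_T$. Evaluating both sides at the additive game $v(S)=\mathbbm{1}[i\in S]$, which is dummy for $i$ with $v(i)=1$ and $\ell_T(v)\equiv 1$, pins down $\sum_T p_T^i=\lambda_i$.

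The main obstacle is extracting $p_T^i\ge 0$ from the Monotonicity axiom: the convenient unanimity games $u_{T\cup i}$ only isolate the partial sums $\sum_{T'\supseteq T}p_{T'}^i$, from which positivity of individual summands cannot be read off directly. I will instead use the strict-containment game
\[
	v_T(S)\define\mathbbm{1}[T\subsetneq S],
\]
which is monotone because $T\subsetneq S\subseteq S'$ implies $T\subseteq S'$, while $T=S'$ would force $S=T$ against $T\subsetneq S$. A short computation using $i\notin T$ gives $v_T(T'\cup i)=\mathbbm{1}[T\subseteq T']$ and $v_T(T')=\mathbbm{1}[T\subsetneq T']$ for every $T'\in\Link{i}{\Delta}$, so $\ell_{T'}(v_T)=\mathbbm{1}[T=T']$. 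Consequently $\phi_i(v_T)=p_T^i$, and monotonicity yields $p_T^i\ge 0$, completing the characterization.
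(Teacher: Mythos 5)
Your proposal is correct: the sufficiency check is the same routine computation as in the paper, and for necessity your three ingredients --- linear independence of the marginal functionals $\ell_T$, vanishing of $\phi_i$ on $\ker E$ via the $\lambda_i$-dummy axiom (every $v\in\ker E$ has $v(i)=v(\emptyset)=0$ from $T=\emptyset$ and is then dummy for $i$), and extraction of each coefficient by testing against the monotone strict-containment game --- are all sound; in particular the identity $\ell_{T'}\bigl(\mathbbm{1}[T\subsetneq\cdot\,]\bigr)=\mathbbm{1}[T=T']$ checks out.

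Where you genuinely diverge from the paper is the middle step. The paper expands $v=\sum_{T\in\Delta}\mathbbm{1}_T\,v(T)$ in the indicator basis of $\charFun$ and computes the coefficients $\phi_i(\mathbbm{1}_T)$ one by one: it applies the dummy axiom to the carrier games $v_T$ and $\hat v_T$ to show $\phi_i(\mathbbm{1}_T)=0$ whenever $T\cup i\notin\Delta$, and pairs $\phi_i(\mathbbm{1}_{T\cup i})=-\phi_i(\mathbbm{1}_T)$ for $T\in\Link{i}{\Delta}$ to assemble the marginal form. You argue dually: $\phi_i$ annihilates the kernel of the marginal-evaluation map $E$, and surjectivity of $E$ (from the independence of the $\ell_T$) places $\phi_i$ in their span with uniquely determined coefficients. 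The information content is the same --- your $\ker E$ is precisely the set of games on which the paper's coefficient bookkeeping returns zero --- but your packaging avoids the term-pairing on the indicator basis (which in the paper is stated somewhat elliptically) and gives uniqueness of the $p_T^i$ for free. The endgame then coincides with the paper's: your strict-containment test game is exactly the paper's $\hat v_T$ (beware the notational clash --- what you call $v_T$ is the paper's $\hat v_T$; the paper's $v_T$ is the non-strict unanimity game), the paper likewise reads off $p_T^i=\phi_i(\hat v_T)\geq 0$ from monotonicity, and both arguments obtain the sum condition $\sum_T p_T^i=\lambda_i$ by evaluating on a game for which $i$ is dummy. One implicit assumption you share with the paper: $\{i\}\in\Delta$, so that $\emptyset\in\Link{i}{\Delta}$ and $v(i)$ is defined.
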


The necessary and sufficient condition of the previous theorem hints a natural generalization of \emph{probabilistic values} for cooperative games on simplicial complexes. For this reason,  
Bilbao, Driessen, Jim\'{e}nez Losada and Lebr\'{o}n \cite{Shapley-matroids-static} called, in the case of matroids, those values \emph{qusi-probabilistic}, see Section \ref{sec:quasi-probabilistic}.

Recall that $\FacetsD$ is the set of facets of the simplicial complex $\Delta$, these are sets in $\Delta$ that are maximal by inclusion.
Moreover, $\Facet{i}{\Delta}$ collects all the facets containing $i$.
In addition, we denote by $\bar{F}$ the simplex on the verticies in $F$, i.e. $\bar{F}=2^F$.

\setcounter{section}{2}
\setcounter{theorem}{1}
\begin{theorem}
	Let $\Delta$ be a simplicial complex and let $\phi_i$ be an individual value for a player $i$ in $[n]$.
	%

	The individual value $\phi_i$ is a quasi-probabilistic value if and only if there exists a probability distribution $\{P^i(F_1), \dots, P^i(F_k)\}$ on $\FacetsD$ such that
	\begin{equation*}
		\sum_{F\in \Facet{i}{\Delta}} P^i(F) = \lambda_i,	
	\end{equation*}	
	and for every $F\in \Facet{i}{\Delta}$ there exists an individual (classical) probabilistic value $\phi_i^F$ defined on the simplex $\bar{F}$ such that for all $v$ in $\charFun$,
	\begin{equation*}
		\phi_i(v)=\sum_{F\in \Facet{i}{\Delta}} P^i(F) \phi_i^F(v|_F),		
	\end{equation*}
	where $v|_F$ is the restriction of the cooperative game $(\Delta, v)$ to $(F, v|_F)$.
\end{theorem}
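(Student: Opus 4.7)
\medskip
\textbf{Proof plan.} The approach is to reduce Theorem 2.2 to Theorem 2.1 by reorganizing the linear representation there as a two-level sum: first over a facet that contains $i$, then over a face of that facet. I would prove the two implications separately.

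\emph{Sufficiency.} Suppose $\phi_i(v)=\sum_{F \in \Facet{i}{\Delta}} P^i(F)\phi_i^F(v|_F)$, with each $\phi_i^F$ a classical probabilistic value on the simplex $\bar F$, so that $\phi_i^F(v|_F)=\sum_{T \in \Link{i}{\bar F}} q_T^{i,F}(v(T\cup i)-v(T))$ with $q_T^{i,F}\geq 0$ and $\sum_T q_T^{i,F}=1$. Setting
\[
p_T^i \;:=\; \sum_{F\in \Facet{i}{\Delta},\, F\supseteq T\cup i} P^i(F)\, q_T^{i,F}, \qquad T\in \Link{i}{\Delta},
\]
gives non-negative coefficients with $\sum_T p_T^i = \sum_F P^i(F) = \lambda_i$, and a routine rearrangement matches the formula of Theorem 2.1. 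Hence $\phi_i$ is $\mathbb{R}$-linear and satisfies the $\lambda_i$-Dummy and Monotonicity axioms, i.e.\ it is quasi-probabilistic.

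\emph{Necessity, and the main obstacle.} Apply Theorem 2.1 to obtain coefficients $\{p_T^i\}$ with $p_T^i\geq 0$ and $\sum_T p_T^i=\lambda_i$. The substantive step is to \emph{disaggregate} each $p_T^i$ across the facets containing $T\cup i$: one must produce a joint distribution on pairs $(T,F)$ whose $T$-marginals recover the prescribed $p_T^i$ and whose $F$-marginals form a probability distribution on $\FacetsD$. Existence, not uniqueness, is what is needed, and the freedom is broad: since $T\cup i\in \Delta$, at least one facet $F\supseteq T\cup i$ exists and automatically lies in $\Facet{i}{\Delta}$, so one may pick any probability vector $(\alpha_T(F))$ on this set (say, uniform). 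Define
\[
P^i(F) \;:=\; \sum_{T \in \Link{i}{\bar F}} p_T^i\, \alpha_T(F), \qquad q_T^{i,F} \;:=\; \frac{p_T^i\,\alpha_T(F)}{P^i(F)} \ \text{when } P^i(F)>0,
\]
for $F\in \Facet{i}{\Delta}$, and distribute the residual mass $1-\lambda_i$ arbitrarily over $\FacetsD\setminus \Facet{i}{\Delta}$ to make $P^i$ a probability distribution on $\FacetsD$. By construction each vector $(q_T^{i,F})_T$ is non-negative with total mass $1$, so $\phi_i^F:=\sum_T q_T^{i,F}(v(T\cup i)-v(T))$ is a classical probabilistic value on $\bar F$, and swapping sums gives $\sum_F P^i(F)\phi_i^F(v|_F)=\sum_T p_T^i(v(T\cup i)-v(T))=\phi_i(v)$. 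The remaining care at facets missing $i$ (and at facets with $P^i(F)=0$) is pure bookkeeping.
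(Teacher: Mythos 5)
Your proposal is correct and follows essentially the same route as the paper: the aggregation $p_T^i=\sum_{F\supseteq T\cup i}P^i(F)q_T^{i,F}$ is the paper's equation for $p^i_T$, and your uniform choice of $\alpha_T$ is exactly the paper's splitting by $1/m_{T\cup i}$, with $P^i(F)$ and $q_T^{i,F}$ defined by the same aggregation and normalization. You are in fact slightly more careful than the paper about the bookkeeping (the residual mass $1-\lambda_i$ on facets not containing $i$, and facets with $P^i(F)=0$), and slightly more general in allowing non-uniform $\alpha_T$, but the underlying argument is the same.
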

\noindent
The previous statement extends Theorem 3.2 in \cite{Shapley-matroids-static} to every simplicial complex.

As in the traditional case, we collect all individual values together, in the \emph{group value} $\phi=(\phi_1, \phi_2, \dots, \phi_n)$.
For sake of presentation of the results, let us assume here that we work in the \emph{efficient} scenario, that is the individual values are nor optimistic or pessimistic.
%
A \emph{group value} $\phi$ for the cooperative game $(\Delta, v)$ is \emph{reducible} if there exists a probability distribution $P$ on the the facets of $\Delta$ such that 
\begin{equation*}
	\phi_i(v)=\sum_{F\in \Facet{i}{\Delta}} P(F) \phi_i^F(v|_F).		
\end{equation*}
where $\phi_i^F$ is a probabilistic value for a cooperative game on the simplex $\bar{F}$ and $v|_F$ is the restriction of the characteristic function $v$ on the simplex $F$, that is $v|_F(S)=v(S)$ for every subset $S$ of $F$.

\noindent
This notion was introduced in \cite{Shapley-matroids-static} as \emph{basic value}. Our choice for the adjective \emph{reducible} is made in view of the results in 
\cite{Martino-Efficiency, Martino-Probabilistic-value}. 

We denote by $\operatorname{Prob}{\Delta}$ the set of probability distribution over the set of facets of $\Delta$. 
Following Section 4 of\cite{Shapley-matroids-static}, we also define the \emph{probabilistic participation influence} $w^P(T)$ for the coalition $T$ as 
\[
	w^P(T)\define \sum_{T\subset F\in \FacetsD} P(F)
\]

\noindent
Using the result of Edmonds \cite{Edmonds-submodular-functions} in \eqref{eq:anti-cor-poly}, Bilbao, Driessen, Jim\'{e}nez Losada and Lebr\'{o}n \cite{Shapley-matroids-static} are able to show 
that 
\[
	\operatorname{anticore} ([n], \operatorname{rk}_{M}) \overset{\operatorname{\scriptscriptstyle \eqref{eq:anti-cor-poly}}}=Q_{M} =\{w^P: P\in \operatorname{Prob}{M}\},
\]
where $Q_{M}$ is \emph{facet polytope} of the matroid $M$, see Section \ref{sec:core}. 

Let $Q_{\Delta}$ be the convex hull in $\mathbb{R}^n$ of vectors $e_F=\sum_{i\in F}e_i$ for every facet F in \FacetsD, where $e_i$ is a standard orthonormal basis of $\mathbb{R}^n$, see Definition \ref{def:face-polytope}.

While the equality on the left does not hold for simplicial complexes, we are able to prove that $Q_{\Delta}=\{w^P: P\in \operatorname{Prob}{\Delta}\}$.
Aside of the generalization per se of Proposition 4.1 of \cite{Shapley-matroids-static}, one perk of the proof of next proposition is that we do not use Edmond result or any connection with the anticore of the cooperative game. 

\setcounter{section}{4}
\setcounter{theorem}{1}
\begin{theorem}
	Let $\Delta$ be a simplicial complex and let $r$ be its rank function. Then
	\[
		Q_{\Delta}=\{w^P: P\in \operatorname{Prob}{\Delta}\}.
	\]
\end{theorem}

All these results together allow us to generalize the main theorem of \cite{Shapley-matroids-static} to pure simplicial complex, i.e. simplicial complex where all the facets have the same cardinality. The next statement characterizes when an individual value is reducible to classical cooperative games defined on the facets of the pure simplicial complex. 
To do this, we need two additional constrains:
 \begin{description}
	\item [\hspace{0.7cm} Substitution for carrier games] For every feasible coalition $T$ and for every pair of players, one has $\phi_i(v_T)=\phi_j(v_T)$ where $v_T$ is the carrier game defined in Definition \ref{def:carrier-games};
	\item [\hspace{0.7cm} Probabilistic efficiency] For every cooperative game $(\Delta, v)$, $\sum_i \phi_i(v)=\sum_{F\in \FacetsD}P(F)v(F)$.
\end{description}

\setcounter{section}{5}
\setcounter{theorem}{0}
\begin{theorem}
	Let $\Delta$ be a pure simplicial complex on $n$ verticies and and let $\phi_i$ be the individual value for a player $i$.
	The group value $\phi$ is reducible and it decomposes as the weighted sum of Shapley values,
	\begin{equation*}
		\phi_i(v)=\sum_{F\in \Facet{i}{\Delta}} P(F)\operatorname{Shapley}^F(v|_F).	
	\end{equation*}
	if and only if each $\phi_i$ is a linear function that satisfies the $w^P(i)$-dummy axiom and the group value fulfills the \emph{Substitution for carrier games} and the \emph{Probabilistic efficiency} axioms. 
\end{theorem}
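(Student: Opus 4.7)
I would handle the two directions of the biconditional separately. The forward direction is a routine axiom verification: $\mathbb{R}$-linearity of $\phi_i$ is inherited from $v\mapsto v|_F$ and from $\mathrm{Shapley}^F$; if $i$ is a dummy for $(\Delta,v)$ then $i$ is also a dummy for $(\bar F, v|_F)$ for every facet $F\in\Facet{i}{\Delta}$, so the classical Shapley dummy axiom gives $\mathrm{Shapley}^F_i(v|_F)=v(i)$ and hence $\phi_i(v)=v(i)\sum_{F\in\Facet{i}{\Delta}} P(F)=w^P(i)v(i)$; probabilistic efficiency follows by swapping the double sum $\sum_i\sum_{F\ni i}$ into $\sum_F\sum_{i\in F}$ and applying classical Shapley efficiency on each simplex $\bar F$; Substitution for carrier games on each $\bar F$ is the standard symmetry axiom for $\mathrm{Shapley}^F$ applied to the restricted game $v_T|_F$ with $T\subseteq F$.

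For the converse, the key observation is that the carrier games $\{v_T : T\in\Delta,\ T\neq\emptyset\}$ form an $\mathbb{R}$-basis of $\charFun$: there are $|\Delta|-1$ of them, matching $\dim\charFun$, and they are linearly independent by the standard inclusion-minimality argument (the Harsanyi change of variables $v=\sum_T c_T v_T$ is triangular in inclusion order). Since both sides of the desired identity are $\mathbb{R}$-linear in $v$, it suffices to verify the identity on each $v_T$. Fix $T\in\Delta\setminus\{\emptyset\}$. For $i\notin T$, every marginal $v_T(S\cup i)-v_T(S)$ vanishes (adjoining $i\notin T$ cannot turn a non-superset of $T$ into a superset), so $i$ is a dummy in $(\Delta,v_T)$ with $v_T(i)=0$, and the $w^P(i)$-dummy axiom yields $\phi_i(v_T)=0$. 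For $i\in T$, the players of $T$ are pairwise substitutes in $v_T$, so Substitution for carrier games forces $\phi_i(v_T)$ to be a common value $c$ on $T$; probabilistic efficiency then gives
\[
|T|\,c \;=\; \sum_{j\in[n]} \phi_j(v_T) \;=\; \sum_{F\in\FacetsD} P(F)\,v_T(F) \;=\; \sum_{F\in\FacetsD,\ T\subseteq F} P(F) \;=\; w^P(T),
\]
so $c=w^P(T)/|T|$.

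A direct evaluation of the right-hand side of the theorem on $v_T$ produces the same numbers: $v_T|_F$ is the unanimity game on $T$ inside the simplex $\bar F$ when $T\subseteq F$ and vanishes identically otherwise; the classical Shapley value of a unanimity game $u_T$ assigns $1/|T|$ to each $i\in T$ and $0$ to the rest; summing with weights $P(F)$ over $F\in\FacetsD$ produces $w^P(T)/|T|$ for $i\in T$ (using that $i\in T\subseteq F$ makes $F\in\Facet{i}{\Delta}$ automatic) and $0$ for $i\notin T$. The axiomatic and the explicit computations agree on the whole basis, which establishes the decomposition by linearity.

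\textbf{Main obstacle.} The delicate step is the precise reading of Substitution for carrier games: it must be interpreted as equality of $\phi_i(v_T)$ among players in the carrier $T$ (the setting where classical Shapley symmetry applies), which is the only interpretation compatible with the $w^P(i)$-dummy axiom zeroing out players outside $T$. Purity of $\Delta$ does not enter the unanimity-game calculation itself but is what makes the weighted Shapley decomposition and the right-hand side $\sum_F P(F)v(F)$ of Probabilistic efficiency the natural normalizations, tying the facet-polytope viewpoint of the previous section to the axiomatic characterization here.
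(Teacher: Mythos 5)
Your proof is correct, but it takes a genuinely different route from the paper's. The paper's own argument is a three-sentence sketch that routes through Theorems \ref{thm:dummy_and_monotone} and \ref{thm:quasi-probabilistic}: it first invokes the quasi-probabilistic decomposition $\phi_i=\sum_F P^i(F)\phi_i^F$ with player-dependent distributions $P^i$, then asserts that the Substitution axiom forces each local value $\phi_i^F$ to be the Shapley value on $\bar{F}$ and that Probabilistic efficiency supplies the single weighting $P$. You instead bypass both earlier theorems with a direct uniqueness argument in the style of Shapley's original proof: the carrier games $\{v_T\}_{\emptyset\neq T\in\Delta}$ form a triangular basis of $\charFun$, the axioms pin down $\phi_i(v_T)$ completely ($0$ for $i\notin T$ by the $w^P(i)$-dummy axiom, and $w^P(T)/|T|$ for $i\in T$ by Substitution plus Probabilistic efficiency), and an explicit unanimity-game computation shows the weighted Shapley sum takes the same values on this basis, so linearity finishes. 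What your approach buys is self-containedness and, more importantly, a rigorous treatment of the step the paper's sketch glosses over, namely passing from the player-dependent distributions $P^i$ of Theorem \ref{thm:quasi-probabilistic} to a single $P$ common to all players; what the paper's approach buys is modularity with respect to the earlier results. Two remarks. First, you are right that Substitution must be read as equality of $\phi_i(v_T)$ among players inside the carrier $T$ (i.e.\ among substitutes); the literal ``every pair of players'' would, together with the dummy axiom and efficiency, force $w^P(T)=0$ for intermediate coalitions and is therefore untenable, so flagging this is appropriate. Second, your argument nowhere uses purity of $\Delta$ (the Shapley value of the unanimity game $u_T$ on $\bar{F}$ is $1/|T|$ for $i\in T$ independently of $|F|$), which suggests the characterization as you prove it holds verbatim for arbitrary simplicial complexes; this is worth stating explicitly, since the paper attributes to the Substitution axiom the role of ``enforcing'' purity.
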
 

The \emph{substitution} axiom is the one enforcing that $\Delta$ is a pure simplicial complex; the probabilistic efficiency provides, instead, the weights of the decomposition of $\phi_i$ as sum of Shapley values.

\section*{Motivations}
\noindent
It is worth to mention a few reasons why the generalization provided is relevant.

\textbf{i)} Not every pure simplicial complex is a matroid. Figure \ref{fig:figure-matroid-yes} and \ref{fig:figure-matroid-no} show two pure simplicial complex and only the one in the left is a matroid.
Thus, the new results extend quite substantially the results in Theorem 3.1, Theorem 3.2 and Theorem 4.2 of \cite{Shapley-matroids-static}.

\begin{figure}
\centering
\begin{subfigure}[b]{5cm}
\begin{tikzpicture}[scale=0.70]
   	\coordinate (F) at ( -3.0,  1.0);
  	\coordinate (S) at (-3.0, -2.0); 
	\coordinate (FB) at ( 0.0, -0.0); 
	\coordinate (TV) at (3.0, 1.0); 
	\coordinate (E) at (3.0, -2.0);
	 
   \node [left] at ( -3.0,  1.0) {1}; 
   \node [left] at (-3.0, -2.0) {2}; 
	\node [above] at ( 0.0, -0.0) {3}; 
   \node [right] at (3.0, 1.0) {4}; 
   \node [right] at (3.0, -2.0) {5};

   \filldraw [draw=black, fill=red!20, line width=1.5pt] (FB)--(TV)--(E) -- (FB);
   
   \filldraw [draw=black, fill=green!20, line width=1.5pt] (FB)--(S)--(E) -- (FB);

\filldraw [draw=black, fill=blue!20, line width=1.5pt] (FB)--(F)--(S) -- (FB); 
   
   
\end{tikzpicture}
        \subcaption{This simplicial complex is a matroid.}
        \label{fig:figure-matroid-yes}
    \end{subfigure}
~\hspace{0.5cm}
	\begin{subfigure}[b]{5cm}
\begin{tikzpicture}[scale=0.75]
   	\coordinate (F) at ( -3.0,  1.0);
  	\coordinate (S) at (-3.0, -2.0); 
	\coordinate (FB) at ( 0.0, -0.0); 
	\coordinate (TV) at (3.0, 1.0); 
	\coordinate (E) at (3.0, -2.0);
	 
   \node [left] at ( -3.0,  1.0) {1}; 
   \node [left] at (-3.0, -2.0) {2}; 
	\node [above] at ( 0.0, -0.0) {3}; 
   \node [right] at (3.0, 1.0) {4}; 
   \node [right] at (3.0, -2.0) {5};

   \filldraw [draw=black, fill=red!20, line width=1.5pt] (FB)--(TV)--(E) -- (FB);

   \filldraw [draw=black, fill=blue!20, line width=1.5pt] (FB)--(S)--(F) -- (FB);
   
\end{tikzpicture}
        \subcaption{This simplicial complex is not a matroid.}
        \label{fig:figure-matroid-no}
	\end{subfigure}

\caption{Two examples of pure simplicial complexes.}
\end{figure}
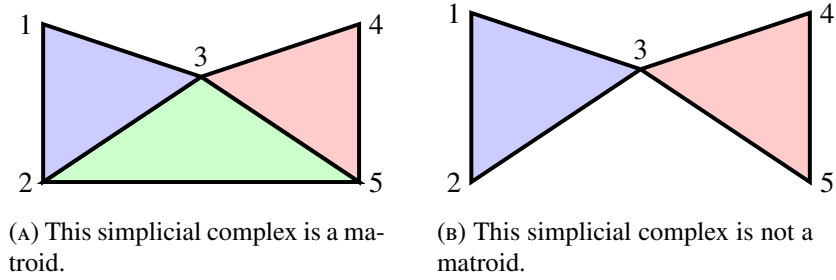

\vspace{0.1cm}
\textbf{ii)} Aside the generalization from matroids to simplicial complexes in it-self, our proofs of the results show that the main matroidal properties are not useful in this context \cite{Stanley2012b, Stanley1996a,  MR782306, Oxley} and underline the importance of the link of the player, see Definition \ref{def:link}. This will be crucial in the subsequent works 
\cite{Martino-Efficiency,Martino-Probabilistic-value}.

\vspace{0.1cm}
\textbf{iii)} A ground-breaking application of the Shapley value methodology is in the interpretability of the Machine-Learning prediction models, see for instance \cite{NIPS2017_7062} and the Phyton package SHAP \href{https://github.com/slundberg/shap/blob/master/README.md#citations}{[Github repository]}, see  \cite{Ribeiro, Erik-Igor, Shrikumar, 7546525, OnPixel, Lipovetsky}.

\vspace{0.1cm}
\textbf{iv)} Last but not the least, another very important application is that \textsc{Google} has started to use it in its own multi-touch attribution system offered in \textit{Google 360}. (This is the marketing platform developed and offered by \google.)

\section*{The prototype example}
We provide here a prototype motivating application of the new results: the \emph{Multi-Touch Attribution} in Marketing. 
In Marketing attribution the individual values are used to understand what \emph{set} of advertisements have influenced a person toward  a desired behavior, typically becoming a costumer.
This is (classically) described as the following cooperative game.
$\Delta=2^n$ is the simplex with verticies in the set of marketing channels, labeled for simplicity from $1$ to $n$. 
To each subset $T$ of $[n]$, the worth function $v$ associates the number of conversions $v(T)$, people who became costumers, influenced by the marketing channels in $T$. In other words, $v(T)$ counts the number of individuals who decided to buy being effected by the advertisements collected in the set $T$. 

\noindent
Unfortunately, the marketing team cannot always track down $v(T)$ for every subset of channels, and setting $v(T)=0$ for those untraceable \emph{coalitions} may results in technical problems such as $v$ is not anymore monotone, that is $v(S)\leq v(T)$ for every $S\subseteq T\subseteq [n]$.
The idea we propose is to mark such coalition as unfeasible and to take them out from $\Delta$. We still assume that if $T$ is a feasible coalition, then every subset of $T$ is still feasible. Thus, $\Delta$ is a simplicial complex, see Section \ref{sec:simplicial-complex}.

\begin{exampleintro*}
		The advertising campaign of a certain store is made of six different marketing channels: the distribution of flyers (F) and an advertising stand (S) in the weekly market of the district, together with social network (FB), email (E), TV and search engine (G) advertisements.
	The shop also offers a discount for on-line purchases that can be retrieved with a promotion code in the flyers or by request in the stand. 
	%
%
	
	\noindent
	Because of the nature of the data, the marketing team cannot analyze if conversions (individuals who make a purchase decision) have been exposed by all the channels and in particular, they can only track the following ones: every subsets of the channels $\{FB, S, F\}$ and every subsets of $\{FB,TV,E, G\}$. 
	Figure \ref{fig:figure-example-2} shows the simplicial complex $\Delta$ of feasible coalitions:
	\[
	\begin{split}
	\Delta=&\left\{\emptyset, \{F\}, \{S\}, \{FB\}, \{E\}, \{TV\}, \{S, F\}, \{FB, F\}, \{FB, S\},\{FB, S, F\}, \right. \\
	&\{FB, E\}, \{FB, TV\}, \{TV, E\}, \{FB, TV, E\},\{FB, G\}, \{TV, G\}, \\
	&\left.  \{FB, TV, G\},	\{E, G\}, \{FB, E, G\}, \{TV, E, G\} \right\}.
	\end{split}	
	\]
\end{exampleintro*} 

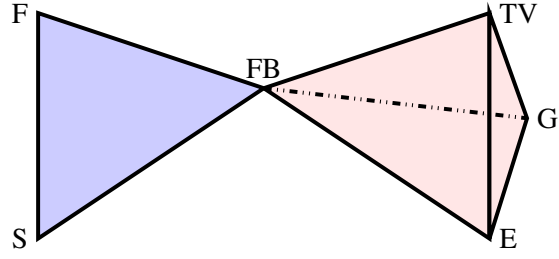
\begin{figure}
\centering
\begin{tikzpicture}[scale=1]
   	\coordinate (F) at ( -3.0,  1.0);
  	\coordinate (S) at (-3.0, -2.0); 
	\coordinate (FB) at ( 0.0, -0.0); 
	\coordinate (TV) at (3.0, 1.0); 
	\coordinate (E) at (3.0, -2.0);
	\coordinate (G) at (3.5, -0.4);

   \node [left] at ( -3.0,  1.0) {F}; 
   \node [left] at (-3.0, -2.0) {S}; 
	\node [above] at ( 0.0, -0.0) {FB}; 
   \node [right] at (3.0, 1.0) {TV}; 
   \node [right] at (3.0, -2.0) {E}; 
   \node [right] at (3.5, -0.4) {G};



   \filldraw [draw=black, fill=red!10, line width=1.5pt] (FB)--(TV)--(E) -- (FB);
	
   \filldraw [draw=black, fill=red!10, line width=1.5pt] (E)--(TV)--(G) -- (E);

	\draw[draw=black,dash dot dot, line width=1.5pt] (FB) -- (G);   
   
   \filldraw [draw=black, fill=blue!20, line width=1.5pt] (FB)--(S)--(F) -- (FB);
   
\end{tikzpicture}
\caption{The simplicial complex describing the feasible coalitions for the cooperative game of the prototype example.}
\label{fig:figure-example-2}
\end{figure}

\section*{Breaking the prototype example down}

Before moving to the preliminary definitions and proving all the presented results, we want to provide the reader a concrete example by computing the quasi-probabilistic values for the cooperative games in the previous example.

Assume that the cooperative game satisfies the conditions in Theorem \ref{thm:dummy_and_monotone} and enumerate the set of marketing channels as $\{F, S, FB, TV, E, G\}=\{1, 2,3,4,5, 6\}$. 


If we suppress the brackets notation for set and write, for instance $345$ for the set $\{3,4,5\}$. 
Then we write down the individual values as:
\begin{eqnarray*}
	\phi_1(v)&=& p_\emptyset^1 v(1)+p_2^1 (v(12) - v(2))	+ p_3^1(v(13) - v(3)) + p_{23}^1(v(123) - v(23));\\
	\phi_2(v)&=& p_\emptyset^2 v(2)+p_1^2 (v(12) - v(1))	+ p_3^2(v(23) - v(3)) + p_{13}^2(v(123) - v(13));\\
	\phi_3(v)&=& p_\emptyset^3 v(3)+p_1^3 (v(13) - v(1)) + p_2^3 (v(23) - v(2)) + p_{12}^3(v(123) - v(12)+\\
	&+& p_4^3(v(43) - v(4))+ p_5^3(v(35) - v(5))+ p_6^3(v(36) - v(6)) +\\ 
	&+& p_{45}^3(v(345) - v(45))+ p_{46}^3(v(346) - v(46))+p_{56}^3(v(356) - v(56));\\
	\phi_4(v)&=& p_\emptyset^4 v(4)+p_3^4 (v(34) - v(3)) + p_5^4(v(45) - v(5))+ p_6^4(v(46) - v(6))+ \\
	&+& p_{35}^4(v(345) - v(35))+p_{36}^4(v(346) - v(36))+p_{56}^4(v(356) - v(56));\\
	\phi_5(v)&=& p_\emptyset^5 v(5)+p_3^5 (v(35) - v(3)) + p_4^5(v(45) - v(4))+ p_6^5(v(56) - v(6))+ \\
	&+& p_{34}^5(v(345) - v(34))+p_{36}^5(v(356) - v(36))+p_{46}^5(v(456) - v(46));\\
	\phi_6(v)&=& p_\emptyset^6 v(6)+p_3^6 (v(36) - v(3)) + p_5^6(v(56) - v(5))+ p_4^6(v(46) - v(6))+ \\
	&+& p_{35}^6(v(356) - v(35))+p_{34}^6(v(346) - v(36))+p_{45}^6(v(456) - v(45)).
\end{eqnarray*}

Since $\Delta$ is not pure, $\phi$ cannot be reduced as sum of Shapley values.
It is also worth to mention that the face polytope $Q_{\Delta}$ that describes the convex all of probabilistic participation influence is a polytope in $\mathbb{R}^6$ defined by $20 = 6+9+5+2$ points.

%
%
%
%
%
%
%
%
%
%
%

\subsection*{Acknowledgments} 
	The author is currently supported by the Knut and Alice Wallenberg Foundation and by the Royal Swedish Academy of Science.
	
\setcounter{section}{0}
\setcounter{theorem}{0}
\section{Preliminaries}
Let $n$ be a positive integer and we denote by $[n]\define\{1,\dots, n\}$.

\subsection{Simplicial Complexes}\label{sec:simplicial-complex}
A (finite) simplicial complex $\Delta$ over $n$ verticies is a family of subsets in $2^{[n]}$ such that if a set $T$ is in $\Delta$, then any of its subsets $S\subset T$ will be in $\Delta$ too. 
There is a natural rank function, $\rank_{\Delta}$, on $\Delta$ provided by the cardinality of its sets. The elements of $\Delta$ that are maximal by inclusions are called \emph{facets}. 
The maximal value of this rank function is the rank of the simplicial complex $\rank{\Delta}$.
%

All along this work $\Delta$ is a finite simplicial complex over $n$ verticies of rank $r\define \rank{\Delta}$. 
Let $\FacetsD$ be the set of facets of $\Delta$, $\FacetsD=\{F_1, \dots, F_k\}$ and for every $T\in \Delta$ let $\Facet{T}{\Delta}$ be the set of facets in $\Delta$ that contain $S$.

%
%
%
If $S$ is an element in $\Delta$, then $\bar{S}\define\{T: T\subseteq S\}=2^{S}$ is the simplicial complex obtained by all sets contained in $S$. 
%
The next definition is extremely important for this work.

%
%

\begin{definition}\label{def:link}
The link of of an element $S$ in a simplicial complex $\Delta$ is made by the subsets $A$ of $T\in \Delta$, such that $T$ is disjoint by $S$ and can be completed by $S$, $S\cup T$, to an element in $\Delta$:

\begin{equation*}
	\Link{S}{\Delta}=\{A:\, A\in \bar{T} \mbox{ with } T\in \Delta \mbox{ such that } S\cap T = \emptyset, S\cup T \in \Delta\}.
\end{equation*}
\end{definition}

\noindent
The case when $S$ is the singleton $\{i\}$ will be extremely relevant in our work: $\Link{i}{\Delta}$ is the set of simplex $T$ in $\Delta$ with $i\notin T$ such that $T\cup i\in \Delta$:
\[
	\Link{i}{\Delta}=\{T\in \Delta: i\notin T \mbox{ and } T\cup i \in \Delta\}.
\]


%

%
%
%

\subsection{Cooperative games on simplicial complexes}
A cooperative game on the simplicial complex $\Delta$ is the pair $(\Delta, v)$ where $v$ is a worth function $v:\Delta\rightarrow \mathbb{R}$ under the constrain $v(\emptyset)=0$. (Here we mean that the function is defined on $\operatorname{Set}(\Delta)$, that is the collection of elements of $\Delta$ forgetting the partial order.)
The verticies $[n]$ of the simplicial complex are the players of the cooperative game and a coalition $T$ is feasible if $T\in \Delta$.
The set $\charFun$
of characteristic functions on $\Delta$ is naturally a real vector space.

Two types of games have a very important role for the theory of probabilistic values, see \cite{Weber-robabilistic-values-for-games}. By abuse of notation we call both families \emph{carrier games} even if the notion usually refer only to the first family:
\[
	\mathcal{C} = \{v_T: \emptyset \neq T \subset [n] \}, \,\,\,\hat{\mathcal{C}} = \{\hat{v}_T: \emptyset \neq T \subset [n] \},
\]
where $v_T$ and $\hat{v}_T$ are so defined:
\[
	v_T (S)=\begin{cases}
				1 &  T\subseteq S\\
				0 &\text{otherwise.}
			\end{cases},\,\,\, \hat{v}_T (S)=\begin{cases}
				1 &  T\subsetneq S\\
				0 &\text{otherwise.}
			\end{cases}
\]
We generalize this definition for any element $T$ of a simplicial complex. Indeed for every partially order set $(P, \leq_P)$ and every element $q$ in $P$ we consider the following function:
\[
	u_q^P(s)\define \begin{cases}
				1 &  q\leq_P s \\
				0 &\text{otherwise.}
			\end{cases},\,\,\, 				\hat{u}_q^P(s)\define \begin{cases}
				1 &  q<_P s \\
				0 &\text{otherwise.}
			\end{cases}
\]
Thus, we define
\[
	v_T (S)\define u_T^\Delta(S),\,\,\, \hat{v}_T (S)\define \hat{u}_T^\Delta (S).
\]
It is easy to see that in the classical case (when $\Delta$ is a full simplex on $n$ verticies) these functions reproduce the carrier games.

\begin{definition}\label{def:carrier-games}
	Let $\Delta$ be a simplicial complex. The sets of carrier games are so defined:
	\[
		\mathcal{C} = \{v_T: \emptyset \neq T \in \Delta \}, \,\,\,\hat{\mathcal{C}} = \{\hat{v}_T: \emptyset \neq T\in \Delta \}, 
	\]
	where $v_T (S)\define u_T^\Delta(S)$ and $\hat{v}_T (S)\define \hat{u}_T^\Delta (S)$.
\end{definition}

\noindent
When $T$ is the empty set, we define $\hat{v}_{\emptyset}\define \hat{u}_{\emptyset}^\Delta$.

We also denote by $\mathbbm{1}_T$ the indicator function of the set $T$ in $\Delta$, that is:
\[
	\mathbbm{1}_T(S)\define \begin{cases}
			1 &  T=S \in \Delta \\
			0 &\text{otherwise.}
		\end{cases}.
\]
We observe that $\mathbbm{1}_T=v_T-\hat{v}_T$.

\begin{definition}
An \emph{individual value} for a player $i$ in $[n]$ is a function $\phi_i:\charFun\rightarrow \mathbb{R}$.
\end{definition}

The first axiom in the theory of probabilistic values is that these functions are linear, see Section 3 of \cite{Weber-robabilistic-values-for-games}.
We are going to only consider linear individual values.


\section{Individual values for simplicial complexes}

Recall that the player $i$ is \emph{dummy} in the cooperative game $(\Delta, v)$ if $v(S\cup i)=v(S)+v(i)$ for every $i\notin S$ and $S\in \Delta$.
Moreover, we say that $v$ is \emph{monotone} if provided $S \subseteq T \in \Delta$, then $v(S)\leq v(T)$.

Next statement extends Theorem 3.1 of \cite{Shapley-matroids-static} and Theorem 2 and 3 of \cite{Weber-robabilistic-values-for-games} to every simplicial complex. 
%

\noindent
We denote by $0\leq \lambda_i \leq 1$ the rate of participation of the player $i$ in the cooperative game $(\Delta, v)$ and let us rewrite the two main conditions in Theorem 3.1 of \cite{Shapley-matroids-static}: 
\begin{description}
	\item [\hspace{1.55cm} $\lambda_i$-Dummy axiom] If the player $i$ is dummy for $(\Delta, v)$, then $\phi_i(v)=\lambda_i v(i)$;
	\item [\hspace{1cm} Monotonicity axiom] If $v$ is monotone, then $\phi_i(v)\geq 0$.
\end{description}

\begin{theorem}\label{thm:dummy_and_monotone}
	Let $\Delta$ be a simplicial complex on $n$ verticies  and let $\phi_i$ be an individual value for a player $i$.
	The individual value $\phi_i$ is a $\mathbb{R}$-linear function satisfying the $\lambda_i$-Dummy axiom and the Monotonicity axiom if and only if there exists a collection of positive real numbers $\{p_T^i: T\in \Link{i}{\Delta}\}$ with 
		\begin{equation}\label{eq-sum-lambda-simplicial-complex}
			\sum_{T\in \Link{i}{\Delta}} p_T^i = \lambda_i,
		\end{equation}	
		such that for all $v$ in $\charFun$,
		\begin{equation}\label{eq-formula-shapley-simplicial-complex}
			\phi_i(v)=\sum_{T\in \Link{i}{\Delta}} p_T^i (v(T \cup i) - v(T)).		
		\end{equation}
\end{theorem}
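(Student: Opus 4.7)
The implication $(\Leftarrow)$ is an immediate verification. Linearity is inherited from the linearity of each marginal map $v\mapsto v(T\cup i)-v(T)$. If $i$ is dummy for $v$, then $v(T\cup i)-v(T)=v(i)$ for every $T\in\LinkD{i}$, hence $\phi_i(v)=v(i)\sum_T p_T^i=\lambda_i v(i)$. If $v$ is monotone, each marginal is non-negative, so non-negativity of the $p_T^i$'s yields $\phi_i(v)\ge 0$.

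For $(\Rightarrow)$ I use the indicator basis $\{\mathbbm{1}_S:S\in\Delta\setminus\{\emptyset\}\}$ of $\charFun$ to expand $\phi_i(v)=\sum_S v(S)a_S$ with $a_S:=\phi_i(\mathbbm{1}_S)$, and apply the $\lambda_i$-Dummy axiom to three families of explicit games to determine these coefficients. First, when $S\in\Delta$, $i\notin S$ and $S\cup i\notin\Delta$, the only $R$ with $R\cup i\in\Delta$ that could contribute a nonzero marginal to $\mathbbm{1}_S$ would be $R=S$, which is ruled out by hypothesis; hence $i$ is dummy for $\mathbbm{1}_S$ and $a_S=\lambda_i\mathbbm{1}_S(i)=0$. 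Second, for $S\in\LinkD{i}$ with $S\neq\emptyset$, a similar direct check shows that $i$ is dummy for $\mathbbm{1}_S+\mathbbm{1}_{S\cup i}$, forcing $a_S+a_{S\cup i}=0$. Third, the carrier game $v_{\{i\}}\in\mathcal{C}$ has $i$ as a dummy with $v_{\{i\}}(i)=1$, so $\phi_i(v_{\{i\}})=\lambda_i$; expanding $v_{\{i\}}=\sum_{S\ni i}\mathbbm{1}_S$ by linearity yields $\sum_{S\ni i}a_S=\lambda_i$. Setting $p_T^i:=a_{T\cup i}$ for $T\in\LinkD{i}$, these identities combine (using $v(\emptyset)=0$ to absorb the $T=\emptyset$ term) to yield both $\sum_T p_T^i=\lambda_i$ and the representation $\phi_i(v)=\sum_{T\in\LinkD{i}}p_T^i(v(T\cup i)-v(T))$.

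It remains to show $p_T^i\ge 0$ for every $T\in\LinkD{i}$. For each such $T$ I evaluate $\phi_i$ on the carrier game $\hat{v}_T\in\hat{\mathcal{C}}$, which is monotone because $S\subseteq S'$ together with $T\subsetneq S$ forces $T\subsetneq S'$. A one-line computation shows that for $T'\in\LinkD{i}$ the marginal $\hat{v}_T(T'\cup i)-\hat{v}_T(T')=\mathbbm{1}_{T'=T}$, since the strict-inclusion condition defining $\hat{v}_T$ is precisely what picks out $T'=T$; so the representation just proved gives $\phi_i(\hat{v}_T)=p_T^i$, and the Monotonicity axiom delivers $p_T^i\ge 0$. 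The main obstacle I anticipate is this last step: a M\"obius-inversion approach based on the (monotone) unanimity games $v_{T\cup i}$ would produce signed sums that are no longer monotone, and so Monotonicity could not be invoked to extract a sign for a single $p_T^i$. The trick is that $\hat{v}_T$ is simultaneously monotone and isolates the single coefficient $p_T^i$, so one application of Monotonicity per link-element suffices.
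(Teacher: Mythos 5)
Your proof is correct and follows essentially the same route as the paper's: expand $\phi_i$ in the indicator basis $\{\mathbbm{1}_S\}$ of $\charFun$, use the $\lambda_i$-dummy axiom on explicit games to pin down the coefficients, and evaluate on the monotone carrier games $\hat{v}_T$ to obtain $p_T^i=\phi_i(\hat{v}_T)\geq 0$. The only cosmetic difference is which dummy games you test against (you use $\mathbbm{1}_S$, $\mathbbm{1}_S+\mathbbm{1}_{S\cup i}$ and $v_{\{i\}}$, whereas the paper manipulates $\mathbbm{1}_T=v_T-\hat{v}_T$ directly), and your handling of those coefficient identities is in fact more explicit than the paper's.
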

\begin{proof} 
	Let us first show the \emph{if part}. 
%
%
	The set $\{\mathbbm{1}_T\}_{T\in \Delta}$ is a basis for the vector space $\charFun$, so every real valued function $v$ can be written uniquely as $v=\sum_{T\in \Delta}\mathbbm{1}_T v(T)$. By the linearity of the function $\phi_i$, we have
	\[
		\phi_i(v)=\phi_i\left(\sum_{T\in \Delta}\mathbbm{1}_T v(T)\right)=\sum_{T\in \Delta}\phi_i(\mathbbm{1}_T)v(T).
	\]
	
	We first start by showing that $\phi_i(\mathbbm{1}_T)=0$ if $T\in\Delta$ but $T\cup i\notin \Delta$.
	Note that $\mathbbm{1}_T=v_T-\hat{v}_T$, that $\phi_i(\hat{v}_T)=\phi_i(\mathbbm{1}_{T\cup i})v(T\cup i)$, and that $\phi_i(v_T)=v_T(i)=0$, because $i$ is dummy for $v_T$.
	Therefore we get $\phi_i(\mathbbm{1}_T)=\phi_i(v_T-\hat{v}_T)=0$.	

	\noindent	
	If $i\in T$, then $\phi_i(\mathbbm{1}_{T})=\phi_i(\mathbbm{1}_T)=\phi_i(\mathbbm{1}_{T\setminus i})$; thus formula \eqref{eq-formula-shapley-simplicial-complex} follows from by setting 
	\[
		p_T^i\define \begin{cases}
				\phi_i(\mathbbm{1}_{T\setminus i}) &  i\in T \\
				-\phi_i(\mathbbm{1}_{T}) &\text{otherwise.}
			\end{cases}.
	\]
	
	Now, let $i$ be a dummy player for $v$, from \eqref{eq-formula-shapley-simplicial-complex}, we get
	\[
		\phi_i(v)=\sum_{T\in \Link{i}{\Delta}} p_T^i v(i) = \left(\sum_{T\in \Link{i}{\Delta}} p_T^i\right) v(i).
	\]
	Equation in \eqref{eq-sum-lambda-simplicial-complex} is obtained by comparing the previous equality with $\phi_i(v)=\lambda_i v(i)$, coming from the $\lambda_i$-dummy property.

	\noindent
	It remains to show that	every $p_T^i$ is a real number greater or equal than zero. For this, consider the monotone function $\hat{v}_T$.
	It is easy to see from \eqref{eq-formula-shapley-simplicial-complex} that  
	\[
		\phi_i(\hat{v}_T)=p_T^i,
	\]
	and the latter is non negative because	$\hat{v}_T$ is a monotone game.
	
	Let us show the \emph{only if part}.
	From \eqref{eq-formula-shapley-simplicial-complex}, we  know that $\phi_i$ is a linear function. Moreover, if $i$ is a dummy player, then $v(S\cup i)=v(S)+v(i)$, so 	
	$$\phi_i(v)=\sum_{T\in \Link{i}{\Delta}} p_T^i v(i) = v(i) \sum_{T\in \Link{i}{\Delta}} p_T^i = \lambda_i v(i).$$
	Finally, if $v$ is monotone, then $v(S \cup i) \geq v(S)$ and, so 	$\phi_i(v)\geq 0$.
\end{proof}

\section{Quasi-probabilistic values}\label{sec:quasi-probabilistic}
 
The necessary and sufficient condition of the previous theorem hints a natural generalization of \emph{probabilistic value} for simplicial complex, given for matroids in \cite{Shapley-matroids-static}.

An individual values $\phi_i$ is a \emph{quasi-probabilistic value} if there exists a collection of positive real numbers 
$$\left\{p_T^i: T\in \Star{i}{\Delta}\right\}$$ 
with 
\begin{equation*}
	\sum_{T\in \Link{i}{\Delta}} p_T^i = \lambda_i,	
\end{equation*}	
such that for all $v$ in $\charFun$,
\begin{equation*}
	\phi_i(v)=\sum_{T\in \Link{i}{\Delta}} p_T^i (v(T \cup i) - v(T)).		
\end{equation*}


The next statement extends Theorem 3.2 in \cite{Shapley-matroids-static} to every simplicial complex, even not pure ones.

\begin{theorem}\label{thm:quasi-probabilistic}
	Let $\Delta$ be a simplicial complex and let $\phi_i$ be an individual value for a player $i$ in $[n]$.
	%

	The individual value $\phi_i$ is quasi-probabilistic if and only if there exists a probability distribution $\{P^i(F_1), \dots, P^i(F_k)\}$ on $\FacetsD$ such that
	\begin{equation}\label{eq-sum-lambda-probability-distribution-simplicial-complex}
		\sum_{F\in \Facet{i}{\Delta}} P^i(F) = \lambda_i		
	\end{equation}	
	and for every $F\in \Facet{i}{\Delta}$ there exists a probabilistic value $\phi_i^F$ defined on the simplex $\bar{F}$ such that for all $v$ in $\charFun$,
	\begin{equation}\label{eq-formula-shapley-weighted-sum-simplicial-complex}
		\phi_i(v)=\sum_{F\in \Facet{i}{\Delta}} P^i(F) \phi_i^F(v|_F),		
	\end{equation}
	where $v|_F$ is the restriction of the cooperative game $(\Delta, v)$ to $(F, v|_F)$.
%
\end{theorem}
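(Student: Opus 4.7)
The plan is to prove the two implications separately. The ``if'' direction is a computational rearrangement, while the ``only if'' direction requires a constructive partitioning of the weights $p_T^i$ furnished by Theorem~\ref{thm:dummy_and_monotone} across the facets of $\Delta$ containing both $i$ and $T$.

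For the \emph{if} direction, I would assume $\phi_i(v)=\sum_{F\in\Facet{i}{\Delta}}P^i(F)\phi_i^F(v|_F)$ and expand each classical probabilistic value $\phi_i^F$ via Weber's formula as $\sum_{T\subseteq F\setminus i} q_T^{i,F}(v(T\cup i)-v(T))$, where $q_T^{i,F}\geq 0$ and $\sum_T q_T^{i,F}=1$. Swapping the order of summation gives
\[
\phi_i(v)=\sum_{T\in\Link{i}{\Delta}} \Bigl(\sum_{F\in\Facet{i}{\Delta},\, F\supseteq T\cup i} P^i(F)\,q_T^{i,F}\Bigr)\,(v(T\cup i)-v(T)).
\]
Reading off $p_T^i:=\sum_{F\supseteq T\cup i} P^i(F)\,q_T^{i,F}\geq 0$ and using $\sum_T q_T^{i,F}=1$ together with \eqref{eq-sum-lambda-probability-distribution-simplicial-complex} yields $\sum_T p_T^i=\sum_{F\in\Facet{i}{\Delta}} P^i(F)=\lambda_i$, so by Theorem~\ref{thm:dummy_and_monotone} the value $\phi_i$ is quasi-probabilistic.

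For the \emph{only if} direction, I would start from the quasi-probabilistic expression for $\phi_i$ with weights $\{p_T^i\}_{T\in\Link{i}{\Delta}}$. Since $T\cup i\in\Delta$ for each such $T$, there is at least one facet $F\in\Facet{i}{\Delta}$ with $T\cup i\subseteq F$; fix any choice function $F(\cdot)\colon\Link{i}{\Delta}\to\Facet{i}{\Delta}$ with $T\cup i\subseteq F(T)$. Define
\[
P^i(F):=\sum_{T\in\Link{i}{\Delta},\, F(T)=F} p_T^i
\]
for $F\in\Facet{i}{\Delta}$, and extend $P^i$ to a probability distribution on $\FacetsD$ by spreading the residual mass $1-\lambda_i$ on facets not containing $i$. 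For each $F\in\Facet{i}{\Delta}$ with $P^i(F)>0$, set $q_T^{i,F}:=p_T^i/P^i(F)$ when $F(T)=F$ and $q_T^{i,F}:=0$ otherwise; on any facet with $P^i(F)=0$ take $\phi_i^F$ to be an arbitrary classical probabilistic value on $\bar{F}$. By construction each $q_T^{i,F}$ is non-negative and the weights sum to one over $T\subseteq F\setminus i$, so every $\phi_i^F$ is a genuine probabilistic value on $\bar{F}$; collapsing the double sum then recovers $\phi_i(v)$ term-by-term.

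The main obstacle is the non-canonicity of the choice function $F(\cdot)$: distinct facet assignments yield genuinely different decompositions of the same $\phi_i$, so the theorem should be read as an existence statement and not as a uniqueness statement. A minor delicacy is the normalization: the construction gives total mass $\lambda_i$ on $\Facet{i}{\Delta}$, which must be padded to a probability distribution on all of $\FacetsD$ by placing the residual $1-\lambda_i$ on facets disjoint from $i$.
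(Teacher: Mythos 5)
Your proof is correct, and its overall architecture matches the paper's: both directions rest on Weber's representation of each classical probabilistic value $\phi_i^F$ on the simplex $\bar F$, and your ``if'' direction (interchange the sums over $T$ and $F$, read off $p_T^i=\sum_{F\supseteq T\cup i}P^i(F)q_T^{i,F}$) is exactly the paper's. The genuine difference is in the ``only if'' direction. Where you fix a choice function $F(\cdot)$ and push the entire weight $p_T^i$ onto the single facet $F(T)$, the paper distributes each $p_T^i$ \emph{uniformly} over all facets containing $T\cup i$: writing $m_{T\cup i}$ for the number of such facets, it sets
\[
P^i(F)=\sum_{T\in\Link{i}{\Delta}\cap F}\frac{p^i_T}{m_{T\cup i}},\qquad p^i_{F,T}=\frac{p^i_T}{m_{T\cup i}\,P^i(F)},
\]
and verifies $p^i_T=\sum_{F\in\Facet{T\cup i}{\Delta}}P^i(F)p^i_{F,T}$. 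Both are legitimate ways of splitting that identity; yours avoids any division by $m_{T\cup i}$ (and hence the implicit issue of $P^i(F)=0$ appearing in a denominator, which the paper does not address), and it makes vivid that the decomposition is highly non-unique, while the paper's version is canonical in that it requires no arbitrary choices and treats the facets through $T\cup i$ symmetrically. Your two closing caveats are both apt: the non-uniqueness is real, and the padding of the residual mass $1-\lambda_i$ onto facets avoiding $i$ is a point the paper glosses over entirely, since its $P^i$ is only ever defined on $\Facet{i}{\Delta}$ even though the statement asks for a distribution on all of $\FacetsD$. One cosmetic remark: at the end of your ``if'' direction you do not need Theorem~\ref{thm:dummy_and_monotone} --- exhibiting non-negative weights $p_T^i$ with $\sum_T p_T^i=\lambda_i$ and the displayed formula is already the definition of a quasi-probabilistic value.
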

\begin{proof}
	Let $i$ be a player and let $P^i$ be a probability distribution on $\Facet{i}{\Delta}$. 	
	Assume that for every $F$ facets in $\Facet{i}{\Delta}$ and for every cooperative game $(F, w)$, the individual value $\phi_i^F$ is a probabilistic value and, then, by Theorem 9 in \cite{Weber-robabilistic-values-for-games}, $\phi_i^F$ is defined as
	\[
		\phi_i^F(w)=\sum_{T\subset F\setminus i} p^i_{F, T} (w(T\cup i)-w(T))
	\]	
	where $p^i_{F, T}$ are non-negative real numbers such that
	\[
		\sum_{T\subset F\setminus i} p^i_{F, T}=1.	
	\] 
	We note that $T\subset F\setminus i$ if and only if $T\in \Link{i}{\Delta}\cap F$.
	
	Then one can show, similarly as in the proof of Theorem 3.2 in \cite{Shapley-matroids-static}, that 
	for all $T$ in $\Link{i}{\Delta}$
	\begin{equation}\label{eq:probability-restriction-simplicial-complex}
		p^i_T=\sum_{F\in \Facet{T\cup i}{\Delta}} P^i(F)p^i_{F, T}.
	\end{equation}
	and also
	\begin{equation}\label{eq:probability-sum-simplicial-complex}
		\sum_{T\in\Link{i}{\Delta}}p^i_T=\sum_{F\in \Facet{T\cup i}{\Delta}} P^i(F).
	\end{equation}
	From the last two equations, one can show easily that the provided conditions \eqref{eq-sum-lambda-probability-distribution-simplicial-complex} and \eqref{eq-formula-shapley-weighted-sum-simplicial-complex} are necessary.

	Let us now focus on the sufficiency of the conditions.
	Consider the restriction $v|_F$ of v to a generic facet $F$ of $\Delta$ and consider an individual probabilistic value $\phi_i^F$ defined on the simplex $\bar{F}$. By definition one has that  
	\begin{equation*}
		\phi_i^F(v|_F)=\sum_{T\subseteq F\setminus i} p_{F, T}^i (v|_F(T \cup i) - v|_F(T)).		
	\end{equation*}
	where $\sum_{T\in \Link{i}{\bar{F}}} p_{F,T}^i=1$.
	Consider also a probability distribution $P^i$ on $\Facet{i}{\Delta}$, that is $P^i(F)\geq 0$ and $\sum_{F\in \Facet{i}{\Delta}} P^i(F)=1$.
	Finally, from the hypothesis, we know that $\phi_i(v)=\sum_{T\subseteq F\setminus i} p_T^i (v(T \cup i) - v(T))$,
	where $\sum_{T\in \Link{i}{\Delta}} p_T^i = \lambda_i$.
	From equation \eqref{eq:probability-sum-simplicial-complex}, one has that if such probability distribution $\{P^i\}$ exists then $\sum_{F\in \Facet{T\cup i}{\Delta}} P^i(F)$ has to be equal $\sum_{T\in\Star{i}{\Delta}}p^i_T$ and, the latter, in the case of a probabilistic value is precisely $\lambda_i$.
	
	It remains to show that $\phi_i(v)=\sum_{F\in \Facet{i}{\Delta}} P^i(F) \phi_i^F(v|_F)$ and for this we need to prove that $P^i$, $p^i_S$ and $p^i_{F, S}$ satisfy equation \eqref{eq:probability-restriction-simplicial-complex}.
	%
	
	\noindent
	Given $S\in \Delta$, let $m_S(\Delta)$ be the number of facets $F$ of $\Delta$ contanining $S$. For all facets F in $\Facet{i}{\Delta}$ denote by
	\[
		P^i(F)\define \sum_{T\in(\Link{i}{\Delta}\cap F)} \frac{p^i_T}{m_{T\cup i}};
	\]
	we also observe that the sum can we simply taken over $\Link{i}{\bar{F}}$ because this set equals $\Link{i}{\Delta}\cap F$. Moreover, for all facets $F$ in $\Facet{T\cup i}{\Delta}$ and for every $T\in \Link{i}{F}$, we define
	\[
		p^i_{F, T}\define \frac{p^i_T}{m_{T\cup i} P^{i}(F)}.
	\]
	Now we substitute the previous value in equation \eqref{eq:probability-sum-simplicial-complex} and one gets 
	\begin{eqnarray*}
		p^i_T&=&\sum_{F\in \Facet{T\cup i}{\Delta}} P^i(F)p^i_{F, T}\\
				&=&\sum_{F\in \Facet{T\cup i}{\Delta}} P^i(F)\frac{p^i_T}{m_{T\cup i} P^{i}(F)}\\
				&=&\sum_{F\in \Facet{T\cup i}{\Delta}} \frac{p^i_T}{m_{T\cup i}}\\
				&=&\frac{p^i_T}{m_{T\cup i}} \sum_{F\in \Facet{T\cup i}{\Delta}} 1.
	\end{eqnarray*}
	The equality holds because by definition 
	$$m_{T\cup i}	= \sum_{F\in \Facet{T\cup i}{\Delta}} 1=|\Facet{T\cup i}{\Delta}|.$$
\end{proof}

As we have done in the introduction, it is important to highlight the discrepancy and the improvement between these results and Theorem 3.2 of \cite{Shapley-matroids-static}:

\begin{romenum}
\item \textbf{i)} Matroids are pure simplicial complexes that satisfies the base exchange properties \cite{Stanley2012b, Stanley1996a,  MR782306, Oxley, Martino2018, Borzi-Martino-D-matroids}. In particular, every facet has the same cardinality, i.e. if the matroid have rank $r$, then every facet has cardinality $r$. In this specific case, facets are called bases.
The integers $m_S(\Delta)$ in the proof of Theorem 3.2 in \cite{Shapley-matroids-static} counts the number of bases (i. e. facets) of the matroid containing $S$. While the concept is the same, in our proof of Theorem \ref{thm:quasi-probabilistic}, $m_S(\Delta)$ takes in consideration facet of different cardinality. 
It is quite remarkable that the probability distribution $\{P(F_i)\}$ in Theorem \ref{thm:quasi-probabilistic} can be defined in the same way regardless of the difference of rank.

\item Not every pure simplicial complex is a matroid. Hence our theorem applies also to the simplicial complex described in Figure \ref{fig:figure-matroid-no}. Simplicial complexes that are not matroids are extremely important in Mathematics; few example can be found in \cite{Martino2015e, MR3503390, Martino-Greco-pinched}.

\item The proof of the statements does not involve any of the matroidal properties \cite{Stanley2012b, Stanley1996a,  MR782306, Oxley}. 
As highlighted in the introduction, none of the matroidal property plays a role in the proof of Theorem \ref{thm:dummy_and_monotone} and \ref{thm:quasi-probabilistic}. This will be different in Section \ref{sec:reducible} where we will need our complex to be pure.
\end{romenum}

\section{Group value and the core}\label{sec:core}

The goal of the individual values is to assign the payoff of the grand coalition $v([n])$ proportionally to the worth of the player. (This is, somehow, the \emph{efficient} scenario: we assume that the values are nor optimistic or pessimistic. We are going to focus on this more in details in the subsequent section.)

\noindent
Therefore, as in the traditional case, we collect all individual values together, in the \emph{group value} $\phi=(\phi_1, \phi_2, \dots, \phi_n)$.
In \cite{Shapley-matroids-static}, they introduce the concept of \emph{basic value}. In view of the results in 
\cite{Martino-Efficiency,Martino-Probabilistic-value}, 
we call this property \emph{reducible}.

\begin{definition}
A \emph{group value} $\phi$ on $\charFun$ is \emph{reducible} if there exists a probability distribution $P$ on the the facets of $\Delta$, $\FacetsD$, such that 
\begin{equation*}
	\phi_i(v)=\sum_{F\in \Facet{i}{\Delta}} P(F) \phi_i^F(v|_F).		
\end{equation*}
where $\phi_i^F$ is a probabilistic value for a cooperative game on the simplex $\bar{F}$ and $v|_F$ is the restriction of the characteristic function $v$ on the simplex $F$, that is $v|_F(S)=v(S)$ for every subset $S$ of $F$.
\end{definition}

\noindent
In other words, a group value is reducible if the group value can be computed reducing the cooperative game to $k=|\FacetsD|$ cooperative games $(\bar{F_1}, v|_{F_1}), \dots, (\bar{F_k}, v|_{F_k})$ on the full simplicies $\bar{F_1}, \dots, \bar{F_k}$.

While every component of a reducible group value $\phi=(\phi_1, \phi_2, \dots, \phi_n)$, is a quasi-probabilistic value, Example 3.1 in \cite{Shapley-matroids-static} shows that the converse in not true. This is because the distribution $P$ on the facets of $\Delta$ needs to be unique for every vertex $i$.

Given a cooperative game $(2^n, v)$, the group value $\phi(v)$ detect a vector in $\mathbb{R}^n$; we could also assume that $\phi_i(v)\geq v(i)$, i.e. each player does not accept any redistribution of the payoff $v([n])$ if it is less than $v(i)$, the amount the player could obtain on its own. We should assume the same for every coalition $T$, that is $\sum _{i\in T}y_{i}\geq v(T)$. Such vectors in $\mathbb{R}^n$ are called \emph{imputations}.

\noindent
Let $x$ and $y$ be two imputations in $\mathbb{R}^n$. We say that $x$ is dominated by $y$ for the cooperative game $([n], v)$, if 
\begin{alphenum}
\item there exists a coalition $T$ such that
	$x_{i}\leq y_{i}$ for all $i\in T$;
\item there exists a player $i$ in $T$ such that $x_{i}<y_{i}$;
\item The coalition $T$ can adopt $y$ as imputation, that is $\sum _{i\in T}y_{i}\geq v(T)$.
\end{alphenum}

\noindent
In other words, the imputation $y$ would be a better deal than $x$ for the players in $T$.
The core is the set of imputations that are not dominated. This notion is used since latest 19-th century, but also more recently appears in several research works; here just a few examples \cite{MR1202057, MR0106116, MR3716594, MR3378384, MR640200, MR2564777, MR3684478}.
Mathematically, this is formulated as it follows.
Assume that for any $(x_1, \dots, x_n)$ in $\mathbb{R}^n$, $x(S)=\sum_{i\in S} x_i$. 
The core of a cooperative game $([n], v)$ is the following set:
\[
	\operatorname{core} ([n], v)\define\{(x_1, \dots, x_n)\in \mathbb{R}^n: x([n])=v([n]),\, x(S)\geq v(S) \forall S \subset [n]\}.
\]

\noindent
Similarly, we define the anticore as
\[
	\operatorname{anticore} ([n], v)\define\{(x_1, \dots, x_n)\in \mathbb{R}^n: x([n])=v([n]),\, x(S)\boldsymbol{\leq} v(S) \forall S \subset [n]\}.
\]

\begin{definition}\label{def:face-polytope}
Given a simplicial complex $\Delta$ over $n$ verticies with facets \FacetsD, the \emph{facet polytope} $Q_{\Delta}$ is the convex hull in $\mathbb{R}^n$ of vectors $e_F=\sum_{i\in F}e_i$ for every facet F in \FacetsD, where $e_i$ is a standard orthonormal basis of $\mathbb{R}^n$:
\[
	Q_{\Delta}=\operatorname{convex} \{e_F: F\in \FacetsD\}.
\]
\end{definition}

The facet polytope and the core do not just share the ambient space. Indeed, when $\Delta=M$ is a matroid, then $Q_{M}$ is the base polytope, also called matroid polytope, (see for instance Chapter 9 of \cite{Zie}) and Edmonds \cite{Edmonds-submodular-functions} has shown that $Q_{M}$ is the anti-core of of the game $([n], r)$, where $r$ is the rank function of the matroid $M$,
\begin{equation}\label{eq:anti-cor-poly}
	\operatorname{anticore} ([n], \operatorname{rk}_{M}) = Q_{M}.
\end{equation}

\noindent
This was also reproved in Theorem 2.3 of \cite{Shapley-matroids-static}.

Moreover, when $\Delta=2^{[n]}$ is the full simplex, Weber \cite{Weber-robabilistic-values-for-games} shows the core of the game is contained in the so called Weber set. Let us recall the definition of Weber set and this result.
Let $S_n$ be the set of bijective function from $[n]$ to $[n]$. 
For every such bijective function $\pi$, we list the image of each element in a unique ordered set $\pi=(\pi(1), \dots, \pi(n))$ and we denote for any $i\in [n]$ by $\pi^i\define \{j\in [n]: \pi(j) < \pi(i)\}$. 
Let $(2^n, v)$ be a cooperative game and we define the marginal worth vector $a^{\pi}(v)$ as the imputation satisfying $a^{\pi}_i(v)=v(\pi^i\cup i)-v(\pi^i)$ for all $i$ in $[n]$. 
Let the Weber set be the set of all imputations which are associated with $v$ by some random-order value (that is equivalent, under the efficiency axiom to an efficient probabilistic group value), that is:
\[
	\operatorname{Weber} (2^n, v) \define \operatorname{convex}\{a^{\pi}(v): \pi\in S_n \}
\]

\begin{theorem}[Thm 14 in \cite{Weber-robabilistic-values-for-games}, Shapley \cite{Shapley-core-convex} and Ichiishi \cite{MR640200}]
	For any cooperative game $([n], v)$, $\operatorname{core} ([n], v) \subseteq \operatorname{Weber} ([n], v)$.
	The equality holds if the game is convex.
\end{theorem}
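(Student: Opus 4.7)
The plan is to tackle the two parts separately, using a separating-hyperplane argument for the inclusion and a direct supermodularity computation for the equality under convexity.

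For the inclusion $\operatorname{core}([n],v) \subseteq \operatorname{Weber}([n],v)$, I would argue by contradiction: suppose $x \in \operatorname{core}([n],v)$ but $x \notin \operatorname{Weber}([n],v)$. Since the Weber set is a convex polytope, the separating hyperplane theorem produces a vector $c\in\mathbb{R}^n$ with $c\cdot x < c\cdot a^\pi(v)$ for every $\pi \in S_n$. The key trick is to choose $\pi$ cleverly: list the indices so that $c_{i_1}\geq c_{i_2}\geq \cdots \geq c_{i_n}$, and set $\pi(i_k)=k$. Writing $S_k=\{i_1,\dots,i_k\}$, the marginal vector satisfies $a^\pi_{i_k}(v)=v(S_k)-v(S_{k-1})$. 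Then Abel summation rewrites both sides as
\begin{equation*}
c\cdot a^\pi(v)=c_{i_n}v([n])+\sum_{k=1}^{n-1}(c_{i_k}-c_{i_{k+1}})v(S_k),
\end{equation*}
\begin{equation*}
c\cdot x=c_{i_n}x([n])+\sum_{k=1}^{n-1}(c_{i_k}-c_{i_{k+1}})x(S_k).
\end{equation*}
Because $x$ is in the core, $x([n])=v([n])$ and $x(S_k)\geq v(S_k)$; because of the chosen ordering, $c_{i_k}-c_{i_{k+1}}\geq 0$. Termwise comparison then yields $c\cdot x \geq c\cdot a^\pi(v)$, contradicting the separation.

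For the convex case, I would prove the sharper statement that $a^\pi(v)\in \operatorname{core}([n],v)$ for every permutation $\pi$, whence the Weber set, being the convex hull of these vectors, is contained in the (convex) core, and combined with the first part this gives equality. Fix $\pi$ and $S\subseteq [n]$, enumerate $S=\{i_{j_1},\dots,i_{j_m}\}$ with $j_1<\cdots<j_m$, and set $S_k=\{i_{j_1},\dots,i_{j_k}\}$. Since $S_{k-1}\subseteq \pi^{i_{j_k}}$, convexity of $v$ (in the supermodular form $v(A\cup i)-v(A)\leq v(B\cup i)-v(B)$ whenever $A\subseteq B$ and $i\notin B$) gives $v(S_k)-v(S_{k-1})\leq v(\pi^{i_{j_k}}\cup i_{j_k})-v(\pi^{i_{j_k}})$. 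Summing on $k$ and telescoping on the left produces $v(S)\leq \sum_{i\in S}a^\pi_i(v)$, while efficiency $\sum_i a^\pi_i(v)=v([n])$ is immediate by telescoping. Hence $a^\pi(v)\in \operatorname{core}([n],v)$.

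The main obstacle, if any, is the bookkeeping in the Abel-summation step of part one: one must verify carefully that both $c\cdot x$ and $c\cdot a^\pi(v)$ admit the same ``rewriting in terms of the sets $S_k$'', and then exploit the monotonicity of $c$ along the chosen permutation together with the core inequalities in the right direction. The rest is essentially formal, and no properties beyond the definitions of the core, the Weber set, and convexity (supermodularity) of $v$ are needed.
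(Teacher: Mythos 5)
Your proof is correct. Note that the paper states this theorem purely as a cited classical result (Theorem 14 of Weber, together with Shapley and Ichiishi) and supplies no proof of its own, so there is no internal argument to compare against; your two-part strategy --- strict separation of a hypothetical core point from the convex hull of the marginal vectors, followed by Abel summation along a permutation ordered by decreasing coordinates of the separating vector, and then, for the convex case, showing every marginal vector $a^{\pi}(v)$ lies in the core via supermodularity and telescoping --- is exactly the standard argument from those sources. The sign bookkeeping you flag as the main obstacle does come out right: since $v(\emptyset)=x(\emptyset)=0$, both sides rewrite identically in terms of the chain $S_1\subset\cdots\subset S_n$, and one gets $c\cdot x - c\cdot a^{\pi}(v)=\sum_{k=1}^{n-1}(c_{i_k}-c_{i_{k+1}})\bigl(x(S_k)-v(S_k)\bigr)\geq 0$, contradicting the strict separation.
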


We denote by $\operatorname{Prob}{\Delta}$ the set of probability distribution over the set of facets of $\Delta$, that is:
\[
	\operatorname{Prob}{\Delta}\define\{P\in \mathbb{R}^{|\FacetsD|}: P(F)\geq 0 \mbox{ and } \sum_{F\in \FacetsD}P(F)=1\}.
\]
Associated to a probability distribution $P$ in $\operatorname{Prob}{\Delta}$, following Section 4 of\cite{Shapley-matroids-static}, we also define the \emph{probabilistic participation influence} $w^P(T)$ of $T$ as 
\[
	w^P(T)=\sum_{T\subset F\in \FacetsD} P(F).
\]
If $T$ is not in $\Delta$ then $w^P(T)=0$; this can also be seen using the previous definition as $T$ is not a subset of any facet of $\Delta$.

\noindent
In Proposition 4.1 of \cite{Shapley-matroids-static}, using Edmonds \cite{Edmonds-submodular-functions} results in \eqref{eq:anti-cor-poly}, they are able to prove that 
\[
	\operatorname{anticore} ([n], \operatorname{rk}_{M}) \overset{\operatorname{\scriptscriptstyle \eqref{eq:anti-cor-poly}}}=Q_{M} =\{w^P: P\in \operatorname{Prob}{M}\}.
\]
While the same equality between the anticore and the facet polytope in \eqref{eq:anti-cor-poly} does not hold also for a generic simplicial complex, we are able to prove that $Q_{\Delta} =\{w^P: P\in \operatorname{Prob}{\Delta}\}$.
Aside of the generalization per se of the result in \cite{Shapley-matroids-static}, one perk of next proposition is that we do not use Edmond result or any connection with the anticore of the cooperative game. 

\begin{theorem}
	Let $\Delta$ be a simplicial complex and let $r$ be its rank function. Then
	\[
		Q_{\Delta}=\{w^P: P\in \operatorname{Prob}{\Delta}\}.
	\]
\end{theorem}
\begin{proof}
	Let $q$ be an element in $Q_{\Delta}$: $q$ can be written as a convex linear combination of the incidence vectors of the facets $\{e_F\}$, that is $q=\sum_F \alpha_F e_F$ with $\alpha_F\geq 0$ and $\sum_F\alpha_F=1$. If we set $P(F)\define \alpha_F$, then $q=w^P$. 
	The opposite inclusion follows similarly.
\end{proof}

\section{Reducible quasi-probabilistic values}\label{sec:reducible}
In the traditional theory, a cooperative game is  defined on the full simplex $\Delta=2^{[n]}$ and the Shapley values are probabilistic values arising from the following common point of view among the players: The player $i$ joins a coalitions of different sizes with the same probability; All coalition of the same size are equally likely.

Thus, the every player has $n$ possibilities to choose the size of a coalition (the joint coalitions may have cardinality $k= 0\leq k\leq n-1$) and, further, there are ${n-1 \choose k}$ choices among all sets (coalitions) of cardinality $k$ among the other players $[n]\setminus i$.
Therefore, one defines the Shapley values for the player $i$ as 
\[
	\operatorname{Shapley}_i(v)=\sum_{T\subseteq [n]\setminus i} \frac{1}{n}\frac{|T|!(n-|T|-1)!}{(n-1)!} (v(T\cup i)-v(T)).
\]

The classical Shapley Theorem characterizes Shapley values as follow:
\begin{theorem}[Shapley's Theorem, see for instance \cite{Weber-robabilistic-values-for-games}]	
	Let $(n, v)$ belong to a cone $
\mathcal{I}$ of cooperative games containing the carrier games $\mathcal{C}$ and $\hat{\mathcal{C}}$.
	Assume that if $v\in\mathcal{I}$, then the permuted game $\pi \cdot v$ is also in $\mathcal{I}$ for every permutation $\pi$ of $[n]$.
	Let $\phi$ be a group value.

	%

	 If each $\phi_i$ is a linear function that satisfies the dummy axiom and the monotonicity axiom and if the symmetric and the efficiency axioms hold for the groups value  $\phi$, then for every cooperative game $(n, v)$ in the domain of $\phi$ and every $i$ in $[n]$, 
	\[
		\phi_i(v)=\operatorname{Shapley}_i(v).
	\]
\end{theorem}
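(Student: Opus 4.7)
The plan is to compute both $\phi_i$ and $\operatorname{Shapley}_i$ on the carrier games $\mathcal{C}=\{v_T\}_{\emptyset\neq T\subseteq [n]}$ and then extend by linearity. The identity $\mathbbm{1}_T = v_T - \hat{v}_T$ shows that the indicator basis of the ambient space of characteristic functions on $[n]$ lies in the linear span of $\mathcal{C}\cup\hat{\mathcal{C}}$, so every $v$ in the domain of $\phi$ admits a decomposition into carrier games. Since $\phi_i$ is linear by hypothesis and $\operatorname{Shapley}_i$ is linear by inspection of its closed-form formula, agreement of the two on $\mathcal{C}$ will force agreement throughout $\mathcal{I}$.

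Next I would pin down $\phi_i(v_T)$ axiom-by-axiom. If $i\notin T$, then $v_T(S\cup i)=v_T(S)$ for every $S$ (since $T\subseteq S$ iff $T\subseteq S\cup i$) and $v_T(i)=0$; hence $i$ is dummy in $(n,v_T)$, and the $\lambda_i$-dummy axiom with $\lambda_i=1$ (forced by efficiency on the basic game with $T=[n]$) gives $\phi_i(v_T)=0$. If $i,j\in T$, then $v_T$ is fixed by the transposition swapping $i$ and $j$, so the symmetry axiom (applicable because $\mathcal{I}$ is closed under permutations) yields $\phi_i(v_T)=\phi_j(v_T)$. Combining with the efficiency axiom
\[
	\sum_{k\in [n]}\phi_k(v_T)=v_T([n])=1,
\]
I obtain $\phi_i(v_T)=1/|T|$ for $i\in T$ and $\phi_i(v_T)=0$ otherwise.

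I would then check that $\operatorname{Shapley}_i$ itself satisfies linearity, dummy, monotonicity, symmetry and efficiency (the weights $\tfrac{|T|!(n-|T|-1)!}{n!}$ depend only on $|T|$, are non-negative and sum to one for each $i$, and the marginal increments telescope to $v([n])$ after summation over $i$). The same axiomatic chain therefore forces $\operatorname{Shapley}_i(v_T)=1/|T|$ for $i\in T$ and $0$ otherwise; the two linear functionals agree on the carrier games, and linearity propagates the equality to all of $\mathcal{I}$.

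The main obstacle I anticipate is the opening reduction, since on a cone $\mathcal{I}$ the linearity axiom only guarantees good behaviour under non-negative combinations. One must check either that $\mathcal{I}$ is closed under the signed decomposition $\mathbbm{1}_T=v_T-\hat{v}_T$, or — as in Weber — that every $v\in\mathcal{I}$ already admits a non-negative expansion in the $\hat{v}_T$'s via inclusion-exclusion on $\{T : v(T)\neq 0\}$. The monotonicity hypothesis, not used directly in the combinatorial step, is what underwrites this bookkeeping: it forces $\phi_i$ to coincide with a probabilistic value in the sense of Theorem \ref{thm:dummy_and_monotone}, which fits the cone structure precisely. Once this is settled, the argument on $\mathcal{C}$ is immediate.
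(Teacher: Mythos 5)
The paper does not actually prove this statement: it is quoted as a classical result and attributed to Weber \cite{Weber-robabilistic-values-for-games}, so there is no in-paper argument to compare against. Your proposal is essentially the standard Shapley--Weber proof and it is sound: evaluate on the unanimity (carrier) games, use the dummy axiom to get $\phi_i(v_T)=0$ for $i\notin T$, use symmetry plus efficiency to get $\phi_i(v_T)=1/|T|$ for $i\in T$, verify that $\operatorname{Shapley}_i$ takes the same values, and conclude by linearity over the basis $\{v_T\}_{\emptyset\neq T\subseteq[n]}$. You also correctly identify the one delicate point: $\mathcal I$ is only a cone, and the Harsanyi coefficients of a game in the unanimity basis may be negative, so ``extend by linearity'' is not literally licensed by the linearity axiom on $\mathcal I$. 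The fix you sketch is the right one and is Weber's own: linearity, dummy and monotonicity first yield the probabilistic form $\phi_i(v)=\sum_{T\not\ni i}p^i_T\,(v(T\cup i)-v(T))$ with $p^i_T\ge 0$ and $\sum_T p^i_T=1$ (Theorem \ref{thm:dummy_and_monotone} with $\Delta=2^{[n]}$ and $\lambda_i=1$); that formula is a bona fide linear functional on the whole game space, it agrees with $\phi_i$ on $\mathcal I$ and with $\operatorname{Shapley}_i$ on the basis $\mathcal C\subseteq\mathcal I$, hence the two coincide everywhere, in particular on $\mathcal I$. One small slip: the parenthetical claim that $\lambda_i=1$ is ``forced by efficiency on the basic game with $T=[n]$'' does not work as written, since no player is dummy in $v_{[n]}$ for $n\ge 2$ (take $S=[n]\setminus i$); this is moot here because the theorem hypothesizes the plain dummy axiom, so $\lambda_i=1$ is given rather than derived, but if you did want to derive it you should instead apply efficiency to an additive game, where every player is dummy.
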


In \cite{Shapley-matroids-static}, they characterize when in individual value can be written as weighted sum of Shapley values defined on the bases of a matroids.

All these results together allow us to generalize the main theorem of \cite{Shapley-matroids-static} to pure simplicial complex. 

\begin{theorem}
	Let $\Delta$ be a \textbf{pure} simplicial complex on $n$ verticies and and let $\phi_i$ be the individual value for a player $i$.
	The group value $\phi$ is reducible and it decomposes as the weighted sum of Shapley values,
	\begin{equation*}
		\phi_i(v)=\sum_{F\in \Facet{i}{\Delta}} P(F)\operatorname{Shapley}^F(v|_F)	
	\end{equation*}
	if and only if each $\phi_i$ is a linear function that satisfies the $w^P(i)$-dummy axiom and the group value fulfills the following two axioms:
	 \begin{description}
		\item [\hspace{0.7cm} Substitution for carrier games] For every coalition $T$ and for every pair of players, one has $\phi_i(v_T)=\phi_j(v_T)$;
		\item [\hspace{0.7cm} Probabilistic efficiency] For every cooperative game $(\Delta, v)$, $\sum_i \phi_i(v)=\sum_{F\in \FacetsD}P(F)v(F)$.
\end{description}
\end{theorem}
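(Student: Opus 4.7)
The strategy will be to verify the \emph{only if} direction by direct axiom-checking and to prove the \emph{if} direction by combining Theorems \ref{thm:dummy_and_monotone} and \ref{thm:quasi-probabilistic} with the classical Shapley Theorem applied facet by facet. For the \emph{only if} direction, starting from $\phi_i(v)=\sum_{F\in\Facet{i}{\Delta}}P(F)\operatorname{Shapley}^F(v|_F)$: linearity follows from the linearity of $v\mapsto v|_F$ and of $\operatorname{Shapley}^F$; if $i$ is dummy for $(\Delta,v)$ then $i$ is dummy for each $(\bar F,v|_F)$ with $F\ni i$, so $\operatorname{Shapley}^F_i(v|_F)=v(i)$ and summing over $F\ni i$ yields $\phi_i(v)=w^P(i)v(i)$; Substitution on a carrier game $v_T$ reduces to the classical symmetry of Shapley on $\bar F$, since $v_T|_F$ is either zero (when $T\not\subseteq F$) or the classical carrier game of $T$ on $\bar F$; Probabilistic efficiency follows from interchanging sums and the classical identity $\sum_{i\in F}\operatorname{Shapley}^F_i(v|_F)=v(F)$.

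For the \emph{if} direction, I would first apply Theorem \ref{thm:dummy_and_monotone} with $\lambda_i=w^P(i)$ to the linearity and $w^P(i)$-Dummy hypotheses, obtaining the representation $\phi_i(v)=\sum_{T\in\Link{i}{\Delta}}p_T^i(v(T\cup i)-v(T))$ with $\sum_T p_T^i=w^P(i)$. Theorem \ref{thm:quasi-probabilistic} would then produce, for each player $i$, a probability distribution $P^i$ on $\Facet{i}{\Delta}$ and classical probabilistic values $\phi_i^F$ on $\bar F$ with $\phi_i(v)=\sum_{F\ni i}P^i(F)\phi_i^F(v|_F)$. Next, evaluating the Substitution axiom on carrier games $v_T$ with $T\subseteq F$ enforces the classical symmetry condition for $\phi_i^F$ on $\bar F$; combined with its built-in linearity, dummy and monotonicity, the classical Shapley Theorem stated at the beginning of Section \ref{sec:reducible} identifies $\phi_i^F=\operatorname{Shapley}^F$. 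Finally, Probabilistic efficiency tested on carrier games concentrated on individual facets will yield equations that force $P^i(F)=P(F)$ independently of $i$, delivering the claimed decomposition.

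The hard part will be the last step: amalgamating the $n$ a priori distinct distributions $\{P^i\}$ produced by Theorem \ref{thm:quasi-probabilistic} into a single $P$. This is where purity of $\Delta$ is essential: the explicit construction in the proof of Theorem \ref{thm:quasi-probabilistic} normalizes by $m_{T\cup i}$, the number of facets containing $T\cup i$, and only when every facet has the same cardinality can the Probabilistic efficiency equation $\sum_i\phi_i(v)=\sum_F P(F)v(F)$ collapse the family $\{P^i\}_{i\in[n]}$ into a common $P$. Without purity one retains only the weaker per-player quasi-probabilistic decomposition of Theorem \ref{thm:quasi-probabilistic}; with purity, Substitution promotes each facet summand to the genuine Shapley value and Probabilistic efficiency pins down the shared weights $P(F)$.
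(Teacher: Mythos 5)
Your plan follows essentially the same route as the paper's own proof: combine Theorems \ref{thm:dummy_and_monotone} and \ref{thm:quasi-probabilistic} to obtain the quasi-probabilistic decomposition, use the Substitution axiom to force each facet-local value to be a Shapley value, and use Probabilistic efficiency to determine the weights. In fact the paper's proof is only a two-sentence sketch, and your outline is considerably more explicit --- in particular you correctly isolate the genuinely delicate step, amalgamating the a priori player-dependent distributions $P^i$ into a single $P$, which the paper's argument does not address at all.
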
 
\begin{proof}
	Part of the characterization, follows form the ones in Theorem \ref{thm:dummy_and_monotone} and Theorem \ref{thm:quasi-probabilistic}. 
	
	The \emph{Substitution for carrier games} axioms imposes that locally each individual value has to be of a Shapley one with maximal facet of the same cardinality. Finally, the \emph{Probabilistic efficiency} provides the requested weighted sum.
\end{proof}

 	\vspace{0.5cm}

\bibliographystyle{amsalpha}
\bibliography{unica}

 	\vspace{0.5cm}
	
 	\noindent
 	{\scshape Ivan Martino}\\
 	{\scshape Department of Mathematics, Royal Institute of Technology.}\\ 
 	{\itshape E-mail address}: \texttt{imartino@kth.se}

\end{document}